\newtheorem{theorem}{Theorem}[section]
\newtheorem{proposition}[theorem]{Proposition}
\newtheorem{lemma}[theorem]{Lemma}
\numberwithin{equation}{section}
\begin{document}

\title{The Classification of Supersingular Elliptic Curves in characteristic 3}
\author[A. Orlov] {Alexey Orlov}
\subjclass[2010]{11G20 11Y16 11T24}

\begin{abstract}
We provide an explicit classification of supersingular elliptic curves in characteristic~3 into isomorphism classes, and give explicit formulae for their point counts. This report was written specifically to support implementation of point counting in Hecke.jl~\cite{Nemo}; accordingly, the reasoning is concrete and elementary. After the first version appeared, we learned that the explicit classification was already obtained by Morain~\cite{Morain97}. We believe the present note remains useful in offering a more self-contained and implementation-oriented treatment. The explicit connection to~\cite{Morain97} is given in the appendix.
\end{abstract}

\maketitle

\section{Canonical Form}

We fix $q=3^d$ and consider an elliptic curve
\[E/\mathbb{F}_q \colon y^2 + a_1xy + a_3y = x^3 + a_2x^2 + a_4x + a_6.\]
Completing the square via the substitution
\[y \mapsto \frac{y - a_1x - a_3}{2},\]
we obtain
\[y^2 = 4x^3 + b_2x^2 + 2b_4x + b_6,\]
where, as usual,
\begin{equation*}\begin{split}
b_2 &= a_1^2 + 4a_2,    \\
b_4 &= 2a_4 + a_1a_3,   \\
b_6 &= a_3^2 + 4a_6.
\end{split}\end{equation*}
In characteristic $3$, this simplifies to
\[y^2 = x^3 + b_2x^2 - b_4x + b_6,\]
with
\[j = \frac{(b_2^2 - 24b_4)^3}{\Delta} = \frac{b_2^6}{\Delta}.\]
For supersingular curves in characteristic $3$, we have $j=0$ (see, e.g., \cite[Ex. V.4.5]{Silverman_2010}) and hence $b_2=0$. We thus obtain the canonical form
\[E \colon y^2 = x^3 + a_4x + a_6,\]
with discriminant $\Delta = -a_4^3 \neq 0$, so that $a_4 \neq 0$.

We now determine when two curves of this form are isomorphic over $\mathbb{F}_q$; for the general transformation formulae, see \cite[Table 3.1]{Silverman_2010}. Consider two curves
\[E \colon y^2 = x^3 + a_4x + a_6,\qquad E' \colon y^2 = x^3 + a_4'x + a_6'.\]
Since $a_1 = a_1' = 0$, the equation
\[ua_1' = a_1 + 2s\]
yields $s=0$. Similarly, since $a_3 = a_3' = 0$, the equation
\[u^3a_3' = a_3 + ra_1 + 2t\]
yields $t=0$. Thus the general form of an isomorphism is $(x,y) = (u^2x' + r, u^3y')$ with $u \in \mathbb{F}_q^{\times}$ and $r \in \mathbb{F}_q$ such that
\begin{equation}\label{eq:isomorphism_supersingular3}\begin{split}
u^4a_4' &= a_4  \\
u^6a_6' &= a_6 + ra_4 + r^3
\end{split}\end{equation}

\subsection{Notation}

We fix the following notation throughout.

When $d$ is even, $-1$ is a square in $\mathbb{F}_q$ (see Section~\ref{sub:even_d_classification}); we denote its square roots by $\pm \tau$. We fix an element $\alpha \in \mathbb{F}_q$ with $\operatorname{Tr}(\alpha) = 1$. Finally we fix a primitive root $\beta$ of $\mathbb{F}_q^{\times}$.

\section{Classification of Supersingular Curve Types}

\subsection{\texorpdfstring{$d$}{d} odd}

When $d$ is odd, since $\gcd(q-1,4) = 2$, the polynomial $x^4-1$ has exactly two roots in $\mathbb{F}_q$, namely $\pm1$. In particular, $-1$ is not a square in $\mathbb{F}_q$.

Moreover, the restriction of the map $x \mapsto x^4$ to $(\mathbb{F}_q^{\times})^2$ has trivial kernel, hence it is injective and thus surjective, since $(\mathbb{F}_q^{\times})^2$ is finite. That is, for odd $d$, an element of $\mathbb{F}_q^{\times}$ is a square if and only if it is a fourth power. Since $-1$ is not a square, exactly one of $a_4$ and $-a_4$ is a square, and hence a fourth power.

We thus consider two types of curves:
\begin{itemize}
    \item{Type \textrm{I}: $-a_4$ is a fourth power.}
    \item{Type $\textrm{I}^{+}$: $a_4$ is a fourth power.}
\end{itemize}

\subsection{\texorpdfstring{$d$}{d} even}\label{sub:even_d_classification}

When $d$ is even, we have $q \equiv 1 \pmod{8}$. Since $-1 = \beta^{(q-1)/2}$, we conclude that $-1$ is a fourth power. The cosets of $(\mathbb{F}_q^\times)^4$ in $\mathbb{F}_q^\times$ are
\[(\mathbb{F}_q^\times)^4, \beta(\mathbb{F}_q^\times)^4, \beta^2(\mathbb{F}_q^\times)^4, \beta^3(\mathbb{F}_q^\times)^4.\]
Since $-1$ is a fourth power, both $a_4$ and $-a_4$ lie in the same coset. We thus consider three curve types:
\begin{itemize}
    \item{Type \textrm{I}: $-a_4$ is a fourth power.}
    \item{Type \textrm{II}: $-a_4$ is a square but not a fourth power.}
    \item{Type \textrm{III}: $-a_4$ is not a square.}
\end{itemize}

\subsection{Type \textrm{I} Curves}\label{sub:type_1_classification}

Suppose that $-a_4 = v^4$ for some $v \in \mathbb{F}_q^{\times}$. Applying the substitution $(x,y) \mapsto (v^{-2}x,v^{-3}y)$, we obtain the equation
\[E \colon y^2 = x^3 - x + b,\]
with $b = a_6v^{-6}$. Suppose
\[E' \colon y^2 = x^3 + a_4'x + a_6'\]
is isomorphic to $E$ over $\mathbb{F}_q$. By \eqref{eq:isomorphism_supersingular3}, we have
\begin{equation*}\begin{split}
u^4a_4' &= -1   \\
u^6a_6' &= b + r^3 + r
\end{split}\end{equation*}
Hence $-a_4' = u^{-4}$, which is a fourth power in $\mathbb{F}_q^{\times}$, so $E'$ is also of type \textrm{I}.

Suppose that two curves
\[E_b \colon y^2 = x^3 - x + b, \textnormal{ and } E_{b'} \colon y^2 = x^3 - x + b'\]
are isomorphic over $\mathbb{F}_q$. By \eqref{eq:isomorphism_supersingular3}, we have
\begin{equation*}\begin{split}
u^4 &= 1 \\
r^3 - r + (b-u^6b') &= 0
\end{split}\end{equation*}
The second equation is an Artin--Schreier equation in $r$, which has a solution in $\mathbb{F}_q$ if and only if
\[\operatorname{Tr}(b-u^6b') = \operatorname{Tr}(b-u^2b') = 0,\]
by the additive Hilbert Theorem 90 (see, e.g., \cite[Prop. 3.1.25]{Cohen_2007}).

When $d$ is odd, we have seen that $u^4 = 1$ implies $u^2 = 1$. Hence the isomorphism class of $E_b$ depends only on $\operatorname{Tr}(b) \in \mathbb{F}_3$, giving exactly three isomorphism classes. Moreover, since the trace is $\mathbb{F}_3$-linear, we have $\operatorname{Tr}(-b) = -\operatorname{Tr}(b)$. Thus for $b$ with non-zero trace, $E_b$ is the quadratic twist of $E_{-b}$ by $-1$. The classification is then as follows:
\begin{enumerate}[(i)]
    \item If $\operatorname{Tr}(b) = 0$, then $E_b$ is isomorphic to $E_0$.
    \item If $\operatorname{Tr}(b) = 1$, then $E_b$ is isomorphic to $E_{\alpha}$.
    \item If $\operatorname{Tr}(b) = -1$, then $E_b$ is a quadratic twist of $E_{\alpha}$.
\end{enumerate}

When $d$ is even, $-1$ is a square in $\mathbb{F}_q$, so $E_b$ and $E_{-b}$ are isomorphic (take $u=\tau$ in \eqref{eq:isomorphism_supersingular3}). Hence there are exactly two isomorphism classes:
\begin{enumerate}[(i)]
    \item If $\operatorname{Tr}(b) = 0$, then $E_b$ is isomorphic to $E_0$.
    \item If $\operatorname{Tr}(b) \neq 0$, then $E_b$ is isomorphic to $E_{\alpha}$.
\end{enumerate}

\subsection{Type \texorpdfstring{$\textrm{I}^{+}$}{\textrm{I}+} Curves}

By an analogous argument, every curve of Type $\textrm{I}^{+}$ is isomorphic to
\[E^{+} \colon y^2 = x^3 + x + b.\]
Suppose that two curves
\[E_b \colon y^2 = x^3 + x + b, \textnormal{ and } E_{b'} \colon y^2 = x^3 + x + b'\]
are isomorphic over $\mathbb{F}_q$. By \eqref{eq:isomorphism_supersingular3}, we have
\begin{equation*}\begin{split}
u^4 &= 1 \\
r^3 + r + (b-u^6b') &= 0
\end{split}\end{equation*}
Since $d$ is odd, $-1$ is not a square, thus the map $x \mapsto x^3+x$ has trivial kernel and gives a bijection on $\mathbb{F}_q$. Thus a solution $r \in \mathbb{F}_q$ exists for any $b, b' \in \mathbb{F}_q$. Therefore all Type $\textrm{I}^{+}$ curves are isomorphic to
\[E \colon y^2 = x^3 + x.\]

\subsection{Type \textrm{II} Curves}

As we will see in the Section~\ref{sub:type_2_point_count}, every elliptic curve of Type \textrm{II} is a quadratic twist of the Type \textrm{I} curve. For point counting, the classification is not needed, but we include it here for completeness.

For Type \textrm{II} curves, we have $a_4 \in \beta^2(\mathbb{F}_q^{\times})^4$. There exists $u \in \mathbb{F}_q^{\times}$ such that $u^4 = \beta^2(-a_4)^{-1}$. Applying the substitution $(x,y) \mapsto (u^2x, u^3y)$, we obtain
\[E \colon y^2 = x^3 - \beta^2 x + b\]
with $b = u^6a_6$. From \eqref{eq:isomorphism_supersingular3}, any elliptic curve isomorphic to $E$ over $\mathbb{F}_q$ is also of Type \textrm{II}.

Suppose that two curves
\[E_b \colon y^2 = x^3 - \beta^2 x + b, \textnormal{ and } E_{b'} \colon y^2 = x^3 - \beta^2 x + b'\]
are isomorphic over $\mathbb{F}_q$. By \eqref{eq:isomorphism_supersingular3}, we have
\begin{equation*}\begin{split}
u^4 &= 1   \\
r^3 - \beta^2r + (b - u^6b') &= 0   \\
\end{split}\end{equation*}
Thus $u \in \{\pm1, \pm\tau\}$, so $u^6 = u^2 = \pm1$, and the second equation becomes $r^3 - \beta^2 r + (b \pm b') = 0$. Substituting $r = \beta z$ we obtain
\[z^3 - z + \frac{b \pm b'}{\beta^3} = 0.\]
This is an Artin--Schreier equation, which has a solution in $\mathbb{F}_q$ if and only if the trace of the constant term vanishes, which is equivalent to $\operatorname{Tr}(b\beta^{-3}) = \pm \operatorname{Tr}(b'\beta^{-3})$. Hence there are exactly two isomorphism classes:
\begin{enumerate}[(i)]
    \item If $\operatorname{Tr}(b\beta^{-3}) = 0$, then $E_b$ is isomorphic to $E_0$.
    \item If $\operatorname{Tr}(b\beta^{-3}) \neq 0$, then $E_b$ is isomorphic to $E_{\alpha\beta^3}$.
\end{enumerate}

\subsection{Type \textrm{III} Curves}

For Type \textrm{III} curves, we have $a_4 \in \beta(\mathbb{F}_q^{\times})^4 \cup \beta^3(\mathbb{F}_q^{\times})^4$. We will distinguish these two cases.

\subsubsection{Type \textrm{III}a Curves}

These are the curves with $a_4 \in \beta(\mathbb{F}_q^{\times})^4$. There exists $u \in \mathbb{F}_q^{\times}$ such that $u^4 = \beta(-a_4)^{-1}$. Applying the substitution $(x,y) \mapsto (u^2x, u^3y)$, we obtain
\[E \colon y^2 = x^3 - \beta x + b\]
with $b = u^6a_6$. From \eqref{eq:isomorphism_supersingular3}, any elliptic curve isomorphic to $E$ over $\mathbb{F}_q$ is also of Type \textrm{III}a.

Suppose that two curves
\[E_b \colon y^2 = x^3 - \beta x + b, \textnormal{ and } E_{b'} \colon y^2 = x^3 - \beta x + b'\]
are isomorphic over $\mathbb{F}_q$. By \eqref{eq:isomorphism_supersingular3}, we have
\begin{equation*}\begin{split}
u^4 &= 1   \\
r^3 - \beta r + (b - u^6b') &= 0   \\
\end{split}\end{equation*}
Thus $u \in \{\pm1, \pm\tau\}$, so $u^6 = u^2 = \pm1$, and the second equation becomes $r^3 - \beta r + (b \pm b') = 0$. Since $\beta$ is a primitive root, and we are in odd characteristic, $\beta$ is not a square, hence the map $x \mapsto x^3 - \beta x$ has trivial kernel giving a bijection on $\mathbb{F}_q$. Hence a solution $r \in \mathbb{F}_q$ always exists, and all Type \textrm{III}a curves are isomorphic to $E \colon y^2 = x^3 - \beta x$.

\subsubsection{Type \textrm{III}b Curves}

These are the curves with $a_4 \in \beta^3(\mathbb{F}_q^{\times})^4$. By an analogous argument, we obtain the canonical form
\[E \colon y^2 = x^3 - \beta^3 x + b.\]
As before, since $\beta^3$ is not a square, the map $x \mapsto x^3 - \beta^3 x$ has trivial kernel. Thus $r^3 - \beta^3 r + (b - u^6b') = 0$ always has a solution, hence all Type \textrm{III}b curves are isomorphic to $E \colon y^2 = x^3 - \beta^3 x$.

\section{Counting Points}

We recall the following standard formula. For an elliptic curve $E \colon y^2 = f(x)$ over a finite field $\mathbb{F}_q$ of odd characteristic, we have
\begin{equation}\label{eq:curve_order_chi}
\#E(\mathbb{F}_q) = q+1 + \sum_{x \in \mathbb{F}_q}\chi(f(x)),
\end{equation}
where $\chi: \mathbb{F}_q^{\times} \to \{\pm1\}$ denotes the quadratic character, extended by $\chi(0) = 0$.

\subsection{Type \textrm{I} Curves}

Recall that every curve of type \textrm{I} is isomorphic to one of the form
\[E \colon y^2 = x^3 - x + b.\]
To count points on it, we require the following result:
\begin{lemma}\label{lem:sum_char_fixed_trace}
Let $q=3^d$. For $a \in \mathbb{F}_3$, define
\[S_d(a) = \sum_{\substack{x \in \mathbb{F}_q \\ \operatorname{Tr}(x) = a}}\chi(x).\]
For $d$ odd,
\[S_d(a) = \begin{cases}
0                                       & \text{if } a = 0, \\
(-1)^{\frac{d-1}{2}} 3^{\frac{d-1}{2}}  & \text{if } a = 1, \\
(-1)^{\frac{d+1}{2}} 3^{\frac{d-1}{2}}  & \text{if } a = -1.
\end{cases}\]
For $d$ even,
\[S_d(a) = \begin{cases}
2 \cdot (-1)^{\frac{d+2}{2}} 3^{\frac{d-2}{2}}  & \text{if } a = 0, \\
(-1)^{\frac{d}{2}} 3^{\frac{d-2}{2}}          & \text{if } a = \pm1.
\end{cases}\]
\end{lemma}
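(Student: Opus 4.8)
The plan is to detect the trace condition with additive characters and thereby reduce $S_d(a)$ to a single classical Gauss sum over $\mathbb{F}_q$. Let $\psi_3$ be the nontrivial additive character of $\mathbb{F}_3$ given by $\psi_3(t)=\zeta_3^{\,t}$ with $\zeta_3=e^{2\pi i/3}$, and let $\psi=\psi_3\circ\operatorname{Tr}$ be the canonical additive character of $\mathbb{F}_q$. Since $\tfrac13\sum_{t\in\mathbb{F}_3}\psi_3\bigl(t(\operatorname{Tr}(x)-a)\bigr)$ is the indicator function of $\{\operatorname{Tr}(x)=a\}$, interchanging the order of summation gives
\[
S_d(a)=\frac13\sum_{t\in\mathbb{F}_3}\psi_3(-ta)\sum_{x\in\mathbb{F}_q}\chi(x)\psi(tx).
\]
The $t=0$ term equals $\sum_{x\in\mathbb{F}_q}\chi(x)=0$ since $\chi$ is nontrivial, and for $t\in\mathbb{F}_3^{\times}$ the substitution $x\mapsto t^{-1}x$, together with $\chi(t^{-1})=\chi(t)$, turns the inner sum into $\chi(t)\,g$, where $g=\sum_{x\in\mathbb{F}_q}\chi(x)\psi(x)$ is the quadratic Gauss sum of $\mathbb{F}_q$. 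Using $-2a=a$ in $\mathbb{F}_3$, this collapses to
\[
S_d(a)=\frac{g}{3}\bigl(\psi_3(-a)+\chi(-1)\,\psi_3(a)\bigr).
\]

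Two ingredients then remain. First, $\mathbb{F}_q^{\times}$ is cyclic of order $3^d-1\equiv(-1)^d-1\pmod 4$, so $-1$ is a square in $\mathbb{F}_q$ exactly when $d$ is even; hence $\chi(-1)=(-1)^d$. Combined with $1+\zeta_3+\zeta_3^2=0$ and $\zeta_3-\zeta_3^2=i\sqrt3$, the last display already exhibits the shape of the answer: $S_d(0)=\tfrac{g}{3}\bigl(1+(-1)^d\bigr)$, which vanishes for odd $d$; while $S_d(\pm1)=\mp\tfrac{i\sqrt3}{3}\,g$ for odd $d$ and $S_d(1)=S_d(-1)=-\tfrac{g}{3}$ for even $d$. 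Second, I need the value of $g$. The base case over $\mathbb{F}_3$ is the classical quadratic Gauss sum $\sum_{x\in\mathbb{F}_3}\chi_3(x)\psi_3(x)=\zeta_3-\zeta_3^2=i\sqrt3$, where $\chi_3$ is the quadratic character of $\mathbb{F}_3$ (the value $i\sqrt3$ rather than $-i\sqrt3$ reflects $3\equiv3\pmod 4$). The Hasse--Davenport lifting relation, applied to the quadratic character and the canonical additive character, then gives $g=(-1)^{d-1}(i\sqrt3)^d=(-1)^{d-1}\,i^d\,3^{d/2}$.

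Substituting this expression for $g$ into the two cases above and simplifying is routine. The one point requiring care---the step I expect to be the main obstacle---is the final bookkeeping of roots of unity: tracking $i^d$, the factor $3^{d/2}$, and the sign $(-1)^{d-1}$, and reconciling them with the exponents $(d\pm1)/2$ and $(d\pm2)/2$ appearing in the statement. I would organize this by separating the residues $d\equiv1,3\pmod 4$ in the odd case (where $i^d=i\cdot(-1)^{(d-1)/2}$) and using $i^d=(-1)^{d/2}$ in the even case; this produces the six displayed values directly. As a consistency check, in each parity one has $S_d(0)+S_d(1)+S_d(-1)=\sum_{x\in\mathbb{F}_q}\chi(x)=0$, which the claimed formulae indeed satisfy.
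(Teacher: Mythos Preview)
Your proof is correct and follows essentially the same route as the paper: detect the trace condition by additive characters, collapse to $S_d(a)=\tfrac{1}{3}G(\chi)\bigl(\zeta_3^{-a}+\chi(-1)\zeta_3^{a}\bigr)$, and then insert the explicit value $G(\chi)=(-1)^{d-1}i^d\sqrt{q}$. The only difference is cosmetic: the paper quotes this Gauss sum value from a reference, whereas you obtain it via the Hasse--Davenport relation from the base case $\sum_{x\in\mathbb{F}_3}\chi_3(x)\zeta_3^{x}=i\sqrt{3}$; the final case analysis and sign bookkeeping are identical.
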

\begin{proof}
Let $\zeta_3=e^{2\pi i/3}$ be the primitive cube root of unity, and define the indicator function
\[g_a(x) = \frac{1}{3}\sum_{k \in \mathbb{F}_3}\zeta_3^{k(\operatorname{Tr}(x)-a)}.\]
By orthogonality of additive characters, this function equals one when $\operatorname{Tr}(x) = a$ and zero otherwise. Hence
\[S_d(a) = \frac{1}{3}\sum_{x \in \mathbb{F}_q} \chi(x)\sum_{k \in \mathbb{F}_3}\zeta_3^{k(\operatorname{Tr}(x)-a)}.\]
Interchanging the order of summation yields
\[S_d(a) = \frac{1}{3}\sum_{k \in \mathbb{F}_3}\zeta_3^{-ka}\sum_{x \in \mathbb{F}_q}\chi(x)\zeta_3^{k\operatorname{Tr}(x)}
    = \frac{1}{3}\sum_{k \in \mathbb{F}_3}\zeta_3^{-ka}\sum_{x \in \mathbb{F}_q}\chi(x)\zeta_3^{\operatorname{Tr}(kx)},\]
where the last equality is by the $\mathbb{F}_3$-linearity of trace.

For $k=0$, the inner sum vanishes. For $k \neq 0$, the substitution $y=kx$ gives
\[\chi(k)^{-1}\sum_{y \in \mathbb{F}_q}\chi(y)\zeta_3^{\operatorname{Tr}(y)} = \chi(k)G(\chi),\]
where $G(\chi)$ is the Gauss sum associated to the canonical additive character. Thus
\begin{equation}\label{eq:sum_char_fixed_trace}
S_d(a) = \frac{1}{3}G(\chi)\left(\zeta_3^{-a} + \zeta_3^{a}\chi(-1)\right).
\end{equation}

By a classical result on Gauss sums (see, e.g., \cite[Th. 5.15]{Lidl_1996}), we have
\[G(\chi) = (-1)^{d-1}i^d\sqrt{q}.\]

For $d$ odd, we have $\chi(-1)=-1$, and \eqref{eq:sum_char_fixed_trace} becomes
\[S_d(a) = \frac{1}{3}i^d\sqrt{q}(\zeta_3^{-a} - \zeta_3^{a}).\]
We have $S_d(0) = 0$ and $S_d(-1) = -S_d(1)$. We compute
\[S_d(1) = -\frac{1}{3}i^{d+1}\sqrt{q}\sqrt{3} = (-1)^{\frac{d-1}{2}} 3^{\frac{d-1}{2}}.\]

For $d$ even, we have $\chi(-1)=1$, and \eqref{eq:sum_char_fixed_trace} becomes
\[S_d(a) = -\frac{1}{3}i^d\sqrt{q}(\zeta_3^{-a} + \zeta_3^{a}).\]
We compute
\[S_d(0) = -\frac{2}{3}i^d\sqrt{q} = 2 \cdot (-1)^{\frac{d+2}{2}} 3^{\frac{d-2}{2}},\]
and
\[S_d(1) = S_d(-1) = \frac{1}{3}i^d\sqrt{q} = (-1)^{\frac{d}{2}} 3^{\frac{d-2}{2}}.\]
\end{proof}

We now apply Lemma~\ref{lem:sum_char_fixed_trace} to count points on Type \textrm{I} curves. We need to compute $\sum_{x \in \mathbb{F}_q}\chi(x^3-x-b)$. The kernel of the Artin--Schreier map $x \mapsto x^3-x$ is $\mathbb{F}_3$, and its image is the set
\[H = \{y \in \mathbb{F}_q: \operatorname{Tr}(y) = 0\}\]
consisting of elements with trace zero. Thus
\[\sum_{x \in \mathbb{F}_q}\chi(x^3-x + b) = 3\sum_{\substack{y \in \mathbb{F}_q \\ \operatorname{Tr}(y) = 0}}\chi(y + b)
    = 3\sum_{\substack{y \in \mathbb{F}_q \\ \operatorname{Tr}(y) = \operatorname{Tr}(b)}}\chi(y).\]
Hence
\[\#E(\mathbb{F}_q) = q+1 + 3S_d(\operatorname{Tr}(b)).\]
We can now write the formulae explicitly.
\begin{proposition}\label{prop:point_count_type1}
Let $q=3^d$ and consider an elliptic curve
\[E/\mathbb{F}_q \colon y^2 = x^3 - x + b.\]
For $d$ odd,
\[\#E(\mathbb{F}_q) = \begin{cases}
q+1                                 & \text{if } \operatorname{Tr}(b) = 0, \\
q+1 + (-1)^{\frac{d-1}{2}}\sqrt{3q} & \text{if } \operatorname{Tr}(b) = 1, \\
q+1 - (-1)^{\frac{d-1}{2}}\sqrt{3q} & \text{if } \operatorname{Tr}(b) = -1.
\end{cases}\]
For $d$ even,
\[\#E(\mathbb{F}_q) = \begin{cases}
q+1 - (-1)^{\frac{d}{2}} 2 \sqrt{q} & \text{if } \operatorname{Tr}(b) = 0, \\
q+1 + (-1)^{\frac{d}{2}} \sqrt{q}   & \text{if } \operatorname{Tr}(b) = \pm1.
\end{cases}\]
\end{proposition}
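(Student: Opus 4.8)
The plan is to combine the character-sum formula \eqref{eq:curve_order_chi} with the reduction already carried out in the text just above this statement, namely
\[\#E(\mathbb{F}_q) = q+1 + 3S_d(\operatorname{Tr}(b)),\]
and then substitute the explicit values of $S_d$ furnished by Lemma~\ref{lem:sum_char_fixed_trace}, simplifying the resulting powers of $3$. Since the genuine content here — the Gauss-sum evaluation inside Lemma~\ref{lem:sum_char_fixed_trace} and the Artin–Schreier change of variables $x \mapsto x^3 - x$ whose image is the trace-zero hyperplane — is already in place, what remains is essentially bookkeeping, and I would present it as a direct case analysis on $\operatorname{Tr}(b) \in \mathbb{F}_3$.

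First, for $d$ odd: if $\operatorname{Tr}(b) = 0$ then $S_d(0) = 0$, so the count is $q+1$. If $\operatorname{Tr}(b) = 1$, then $3S_d(1) = 3 \cdot (-1)^{(d-1)/2} 3^{(d-1)/2} = (-1)^{(d-1)/2} 3^{(d+1)/2}$, and I would observe that $3^{(d+1)/2} = \sqrt{3 \cdot 3^d} = \sqrt{3q}$, giving $q+1 + (-1)^{(d-1)/2}\sqrt{3q}$. The case $\operatorname{Tr}(b) = -1$ then follows at once from $S_d(-1) = -S_d(1)$.

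Next, for $d$ even: if $\operatorname{Tr}(b) = 0$, then $3S_d(0) = 2 \cdot (-1)^{(d+2)/2} 3^{d/2}$; here I would use $(-1)^{(d+2)/2} = -(-1)^{d/2}$ together with $3^{d/2} = \sqrt{q}$ to rewrite this as $-(-1)^{d/2} \cdot 2\sqrt{q}$, yielding the claimed $q+1 - (-1)^{d/2} 2\sqrt{q}$. If $\operatorname{Tr}(b) = \pm1$, then $3S_d(\pm1) = 3 \cdot (-1)^{d/2} 3^{(d-2)/2} = (-1)^{d/2} 3^{d/2} = (-1)^{d/2}\sqrt{q}$, giving $q+1 + (-1)^{d/2}\sqrt{q}$.

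I do not expect a real obstacle: the only points that need a moment's care are the sign rewriting $(-1)^{(d+2)/2} = -(-1)^{d/2}$ in the even trace-zero case and the identifications $3^{(d+1)/2} = \sqrt{3q}$ and $3^{d/2} = \sqrt{q}$. Everything else is immediate substitution, so the proof should be only a few lines.
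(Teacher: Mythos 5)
Your proposal is correct and follows the paper's own route exactly: the paper derives $\#E(\mathbb{F}_q) = q+1+3S_d(\operatorname{Tr}(b))$ via the Artin--Schreier substitution immediately before the proposition and then reads off the cases from Lemma~\ref{lem:sum_char_fixed_trace}. Your sign and power-of-$3$ bookkeeping ($3^{(d+1)/2}=\sqrt{3q}$, $(-1)^{(d+2)/2}=-(-1)^{d/2}$, $3^{d/2}=\sqrt{q}$) all checks out.
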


\subsection{Type \texorpdfstring{$\textrm{I}^{+}$}{\textrm{I}+} Curves}

Recall that every curve of type $\textrm{I}^{+}$ is isomorphic to one of the form
\[E \colon y^2 = x^3 + x,\]
which occurs only when $d$ is odd. As shown above, the map $x \mapsto x^3+x$ is a bijection on $\mathbb{F}_q$, hence
\[\sum_{x \in \mathbb{F}_q}\chi(x^3 + x) = \sum_{y \in \mathbb{F}_q}\chi(y) = 0,\]
and we obtain
\begin{proposition}\label{prop:point_count_type1plus}
Let $q=3^d$ with $d$ odd, and consider an elliptic curve
\[E/\mathbb{F}_q \colon y^2 = x^3 + x.\]
Then $\#E(\mathbb{F}_q) = q+1$.
\end{proposition}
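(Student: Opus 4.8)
The plan is to invoke the point-count formula \eqref{eq:curve_order_chi} and reduce the whole computation to the character sum $\sum_{x \in \mathbb{F}_q}\chi(x^3+x)$. The key structural fact is that $\phi\colon x \mapsto x^3 + x$ is $\mathbb{F}_3$-linear, being the sum of the Frobenius map $x \mapsto x^3$ and the identity. Its kernel is the solution set of $x(x^2+1)=0$; since $d$ is odd, $-1$ is not a square in $\mathbb{F}_q$, so $x^2 = -1$ has no solution and the kernel is $\{0\}$. A linear endomorphism of a finite set with trivial kernel is bijective, so $\phi$ is a permutation of $\mathbb{F}_q$ — this is exactly the observation already recorded in the discussion of Type $\textrm{I}^{+}$ curves.

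Granting this, I would re-index the character sum along the permutation $\phi$:
\[\sum_{x \in \mathbb{F}_q}\chi(x^3+x) = \sum_{y \in \mathbb{F}_q}\chi(y) = \sum_{y \in \mathbb{F}_q^{\times}}\chi(y) = 0,\]
where the second equality uses $\chi(0)=0$ and the third uses that $\chi$ is a nontrivial multiplicative character on $\mathbb{F}_q^{\times}$, so its values sum to zero. Substituting into \eqref{eq:curve_order_chi} yields $\#E(\mathbb{F}_q) = q+1$ immediately.

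There is essentially no obstacle here: the only step that genuinely uses the hypothesis is the bijectivity of $\phi$, which is precisely where the oddness of $d$ enters (through $-1 \notin (\mathbb{F}_q^{\times})^2$), and that has already been established. Everything else is bookkeeping with standard character-sum identities, so the proof will be only a few lines.
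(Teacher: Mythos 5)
Your proof is correct and follows the same route as the paper: establish that $x \mapsto x^3+x$ is an $\mathbb{F}_3$-linear bijection (using that $-1$ is a non-square for odd $d$), re-index the character sum to get $\sum_y \chi(y) = 0$, and apply \eqref{eq:curve_order_chi}. Nothing to add.
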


\subsection{Type \textrm{II} Curves}\label{sub:type_2_point_count}

For Type \textrm{II} curves, we have $-a_4 \in \beta^2(\mathbb{F}_q^{\times})^4$. Write $-a_4 = \gamma^2$. We see that
\[E \colon y^2 = x^3 + a_4 x + b\]
is a quadratic twist by $\gamma$ of the type \textrm{I} curve
\[E \colon y^2 = x^3 - x + b\gamma^{-3}.\]
From Proposition~\ref{prop:point_count_type1} we immediately obtain
\begin{proposition}\label{prop:point_count_type2}
Let $q=3^d$ with $d$ even, and consider an elliptic curve
\[E/\mathbb{F}_q \colon y^2 = x^3 + a_4 x + b\]
with $-a_4 = \gamma^2$ for some $\gamma \in \mathbb{F}_q^{\times}$ not a square. Then
\[\#E(\mathbb{F}_q) = \begin{cases}
q+1 + (-1)^{\frac{d}{2}} 2 \sqrt{q} & \text{if } \operatorname{Tr}(b\gamma^{-3}) = 0, \\
q+1 - (-1)^{\frac{d}{2}} \sqrt{q}   & \text{if } \operatorname{Tr}(b\gamma^{-3}) = \pm1.
\end{cases}\]
\end{proposition}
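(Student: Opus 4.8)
The plan is to make the quadratic twist relation announced just above Proposition~\ref{prop:point_count_type2} explicit, and then read the answer off Proposition~\ref{prop:point_count_type1}. First I would verify the twist. The quadratic twist of the Type~\textrm{I} curve $E': y^2 = x^3 - x + b\gamma^{-3}$ by $\gamma$ is the curve $\gamma y^2 = x^3 - x + b\gamma^{-3}$; replacing $x$ by $\gamma^{-1}x$ and $y$ by $\gamma^{-2}y$ and clearing the factor $\gamma^{-3}$ brings it to the Weierstrass form $y^2 = x^3 - \gamma^2 x + b$. Since $-a_4 = \gamma^2$ by hypothesis, this is exactly $E: y^2 = x^3 + a_4 x + b$. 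Hence $E$ and $E'$ are quadratic twists of one another by the non-square $\gamma$; this is the only step that is not pure bookkeeping, and it is essentially the remark already made in the text.

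Next I would record the standard effect of a non-square twist on the point count. Applying \eqref{eq:curve_order_chi} to $E$ and substituting $x = \gamma t$ gives, using $a_4 = -\gamma^2$ and $\chi(\gamma^3) = \chi(\gamma)$,
\begin{align*}
\sum_{x \in \mathbb{F}_q}\chi(x^3 + a_4 x + b)
 &= \sum_{t \in \mathbb{F}_q}\chi\!\big(\gamma^3(t^3 - t + b\gamma^{-3})\big) \\
 &= \chi(\gamma)\sum_{t \in \mathbb{F}_q}\chi(t^3 - t + b\gamma^{-3}).
\end{align*}
As $\gamma$ is not a square, $\chi(\gamma) = -1$, so $\#E(\mathbb{F}_q) = 2(q+1) - \#E'(\mathbb{F}_q)$. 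It then remains to insert the two values provided by Proposition~\ref{prop:point_count_type1} with $b$ there replaced by $b\gamma^{-3}$ and $d$ even: when $\operatorname{Tr}(b\gamma^{-3}) = 0$ one has $\#E'(\mathbb{F}_q) = q+1 - (-1)^{\frac{d}{2}} 2\sqrt{q}$, hence $\#E(\mathbb{F}_q) = q+1 + (-1)^{\frac{d}{2}} 2\sqrt{q}$; when $\operatorname{Tr}(b\gamma^{-3}) = \pm1$ one has $\#E'(\mathbb{F}_q) = q+1 + (-1)^{\frac{d}{2}}\sqrt{q}$, hence $\#E(\mathbb{F}_q) = q+1 - (-1)^{\frac{d}{2}}\sqrt{q}$. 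This is precisely the asserted formula.

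The whole argument is routine, so I do not anticipate a genuine obstacle; the single point deserving a word is well-definedness of the case distinction. The square root $\gamma$ of $-a_4$ is determined only up to sign, and since $d$ is even $-1$ is a square, so $-\gamma$ is an equally legitimate non-square choice. Replacing $\gamma$ by $-\gamma$ turns $b\gamma^{-3}$ into $-b\gamma^{-3}$, and because the trace is $\mathbb{F}_3$-linear this preserves both the condition $\operatorname{Tr}(b\gamma^{-3}) = 0$ and the condition $\operatorname{Tr}(b\gamma^{-3}) \in \{\pm1\}$. Thus the right-hand side of the formula is independent of the chosen square root, and a one-line remark to that effect completes the proof.
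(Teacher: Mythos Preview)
Your argument is correct and follows exactly the route the paper takes: identify $E$ as the quadratic twist by the non-square $\gamma$ of the Type~\textrm{I} curve $y^2 = x^3 - x + b\gamma^{-3}$, then read off the counts from Proposition~\ref{prop:point_count_type1} via the sign flip in the character sum. Your version is simply more explicit, spelling out the twist change of variables, the relation $\#E = 2(q+1) - \#E'$, and the independence of the choice of $\pm\gamma$, all of which the paper leaves to the reader.
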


\subsection{Type \textrm{III} Curves}

Recall that every curve of Type \textrm{III}a is isomorphic to one of the form
\[E \colon y^2 = x^3 - \beta x.\]
As shown above, $x \mapsto x^3 - \beta x$ is a bijection, hence
\[\sum_{x \in \mathbb{F}_q}\chi(x^3 - \beta x) = \sum_{y \in \mathbb{F}_q}\chi(y) = 0.\]

Similarly, every Type IIIb curve is isomorphic to
\[E \colon y^2 = x^3 - \beta^3 x.\]
Since $x \mapsto x^3 - \beta^3 x$ is also a bijection,
\[\sum_{x \in \mathbb{F}_q}\chi(x^3 - \beta^3 x) = \sum_{y \in \mathbb{F}_q}\chi(y) = 0.\]

We therefore obtain
\begin{proposition}\label{prop:point_count_type3}
Let $q=3^d$ with $d$ even, and let $\beta$ be the fixed primitive root of $\mathbb{F}_q^{\times}$. Consider elliptic curves
\[E/\mathbb{F}_q \colon y^2 = x^3 - \beta x, \textnormal{ and } E'/\mathbb{F}_q \colon y^2 = x^3 - \beta^3 x.\]
Then
\[\#E(\mathbb{F}_q) = \#E'(\mathbb{F}_q) = q+1.\]
\end{proposition}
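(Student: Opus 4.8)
The plan is to apply the point-count formula \eqref{eq:curve_order_chi} directly to each of the two curves and show that the resulting character sum is zero. Since the quadratic character $\chi$ is a nontrivial character of $\mathbb{F}_q^{\times}$ (equivalently, $\mathbb{F}_q^{\times}$ contains equally many squares and non-squares), we have $\sum_{y \in \mathbb{F}_q}\chi(y) = 0$, so it suffices to show that the sum $\sum_{x \in \mathbb{F}_q}\chi(f(x))$ coincides with $\sum_{y \in \mathbb{F}_q}\chi(y)$ after re-indexing.

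For $E: y^2 = x^3 - \beta x$, set $f(x) = x^3 - \beta x$. In characteristic $3$ the cubing map is $\mathbb{F}_3$-linear, so $f$ is a linearized polynomial; its set of zeros is $\{0\} \cup \{x \in \mathbb{F}_q : x^2 = \beta\}$, which reduces to $\{0\}$ because $\beta$, being a primitive root of $\mathbb{F}_q^{\times}$ with $q$ odd, is not a square --- exactly the observation already made in the treatment of Type \textrm{III}a curves. Hence $f$ has trivial kernel, is therefore injective, and being a self-map of the finite set $\mathbb{F}_q$ it is a bijection. Substituting $y = f(x)$ yields $\sum_{x \in \mathbb{F}_q}\chi(f(x)) = \sum_{y \in \mathbb{F}_q}\chi(y) = 0$, and \eqref{eq:curve_order_chi} gives $\#E(\mathbb{F}_q) = q+1$. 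The argument for $E': y^2 = x^3 - \beta^3 x$ is identical: $\beta^3$ is again a non-square (indeed it is itself a primitive root, since $\gcd(3, q-1) = 1$), so $x \mapsto x^3 - \beta^3 x$ is a bijection and the corresponding sum vanishes, giving $\#E'(\mathbb{F}_q) = q+1$.

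There is no genuine obstacle here; essentially all the content is the bijectivity of the two linearized maps, which has already been established, and the standard fact $\sum_{y \in \mathbb{F}_q}\chi(y) = 0$. The only point worth stating carefully is that the re-indexing is legitimate precisely because $f$ is a bijection of $\mathbb{F}_q$ (so that every value, including $0$, is hit exactly once), which is why the non-squareness of $\beta$ and $\beta^3$ is the crucial input.
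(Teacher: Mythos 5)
Your proof is correct and follows the same route as the paper: both establish that $x \mapsto x^3 - \beta x$ and $x \mapsto x^3 - \beta^3 x$ are bijections of $\mathbb{F}_q$ (via the non-squareness of $\beta$ and $\beta^3$, already noted in the Type \textrm{III} classification), re-index the character sum in \eqref{eq:curve_order_chi}, and invoke $\sum_{y}\chi(y)=0$. No discrepancies to report.
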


\section{Algorithm for Counting Points on Supersingular Curves in Characteristic 3}

Let $q=3^d$, and let
\[E \colon y^2 = x^3 + a_4x + a_6\]
be a supersingular elliptic curve.

\paragraph{\textbf{$d$ odd.}}
If $-a_4 = v^4$ for some $v \in \mathbb{F}_q^{\times}$, then
\[\#E(\mathbb{F}_q) = \begin{cases}
q + 1               & \text{if } \operatorname{Tr}(a_6v^{-6}) = 0,  \\
q + 1 + \sqrt{3q}   & \text{if } \operatorname{Tr}(a_6v^{-6}) = 1, \textnormal{ and } d \equiv 1 \pmod{4},  \\
q + 1 - \sqrt{3q}   & \text{if } \operatorname{Tr}(a_6v^{-6}) = 1, \textnormal{ and } d \equiv 3 \pmod{4},  \\
q + 1 - \sqrt{3q}   & \text{if } \operatorname{Tr}(a_6v^{-6}) = -1, \textnormal{ and } d \equiv 1 \pmod{4}, \\
q + 1 + \sqrt{3q}   & \text{if } \operatorname{Tr}(a_6v^{-6}) = -1, \textnormal{ and } d \equiv 3 \pmod{4}.
\end{cases}\]
Otherwise, $\#E(\mathbb{F}_q) = q+1$.

\paragraph{\textbf{$d$ even.}}
If $-a_4 = \gamma^2$ for some $\gamma \in \mathbb{F}_q^{\times}$, and $\gamma$ itself is square, we have
\[\#E(\mathbb{F}_q) = \begin{cases}
q + 1 - 2\sqrt{q}   & \text{if } \operatorname{Tr}(a_6\gamma^{-3}) = 0, \textnormal{ and } d \equiv 0 \pmod{4},      \\
q + 1 + 2\sqrt{q}   & \text{if } \operatorname{Tr}(a_6\gamma^{-3}) = 0, \textnormal{ and } d \equiv 2 \pmod{4},      \\
q + 1 + \sqrt{q}    & \text{if } \operatorname{Tr}(a_6\gamma^{-3}) \neq 0, \textnormal{ and } d \equiv 0 \pmod{4},   \\
q + 1 - \sqrt{q}    & \text{if } \operatorname{Tr}(a_6\gamma^{-3}) \neq 0, \textnormal{ and } d \equiv 2 \pmod{4}.
\end{cases}\]

If $-a_4 = \gamma^2$ for some $\gamma \in \mathbb{F}_q^{\times}$, and $\gamma$ is not a square, we have
\[\#E(\mathbb{F}_q) = \begin{cases}
q + 1 + 2\sqrt{q}   & \text{if } \operatorname{Tr}(a_6\gamma^{-3}) = 0, \textnormal{ and } d \equiv 0 \pmod{4},      \\
q + 1 - 2\sqrt{q}   & \text{if } \operatorname{Tr}(a_6\gamma^{-3}) = 0, \textnormal{ and } d \equiv 2 \pmod{4},      \\
q + 1 - \sqrt{q}    & \text{if } \operatorname{Tr}(a_6\gamma^{-3}) \neq 0, \textnormal{ and } d \equiv 0 \pmod{4},   \\
q + 1 + \sqrt{q}    & \text{if } \operatorname{Tr}(a_6\gamma^{-3}) \neq 0, \textnormal{ and } d \equiv 2 \pmod{4};
\end{cases}\]

Finally, if $-a_4$ is not a square, then $\#E(\mathbb{F}_q) = q+1$.

\appendix\section{Connection to Morain's classification}

In \cite{Morain97} the curves $E_{a,b}^M \colon y^2 = x^3 + ax + b$ are considered. Following Morain we write $\delta$ for some element of trace one, and $\gamma$ for a non-square element.

Our classification corresponds to Morain's as follows:
\begin{itemize}
    \item Type~\textrm{I} curves correspond to $E_{-1,b}^M$. Proposition~\ref{prop:point_count_type1} gives an explicit point-count formula for the curves isomorphic to $E_{-1,\delta}^M$, $E_{-1,-\delta}^M$, $E_{-1,0}^M$ when $d$ is odd, and to $E_{-1,\delta}^M$ and $E_{-1,0}^M$ when $d$ is even.
    \item Type~$\textrm{I}^{+}$ curves are isomorphic to $E_{1,0}^M$.
    \item Type~\textrm{II} curves are isomorphic to $E_{-\gamma^2,0}^M$ and $E_{-\gamma^2,\gamma^3\delta}^M$.
    \item Type~\textrm{IIIa} curves are isomorphic to $E_{-\gamma,0}^M$, and Type~\textrm{IIIb} curves are isomorphic to $E_{-\gamma^3,0}^M$.
\end{itemize}

It is clear that our classification agrees with the one by Morain, and gives the same formulae for the number of points and the trace of Frobenius.

\FloatBarrier
\printbibliography

\end{document}